\documentclass[10pt]{amsart}
\usepackage[title]{appendix}
\usepackage[utf8]{inputenc}
\usepackage{amsmath,amssymb,amsthm,graphics}
\usepackage{colonequals}
\usepackage{bm}
\usepackage{diagmac2}
\usepackage{enumitem}
\usepackage{textcomp}
\usepackage[alphabetic]{amsrefs}
\usepackage[all,cmtip]{xy}
\usepackage{yfonts}
\usepackage{stmaryrd}
\usepackage{mathrsfs}
\usepackage{caption}
\usepackage{listings}
\usepackage{comment}
\usepackage[colorlinks,anchorcolor=blue,citecolor=blue,linkcolor=blue,urlcolor =blue,bookmarksdepth=2]{hyperref}
\usepackage{tikz-cd}
\usepackage{mathtools}
\usepackage[flushleft]{threeparttable} %Tablenotes
\usepackage{multirow, makecell}
\usepackage{lineno} %%% This is to name the lines. Need to Remove
\usepackage{soul} %used when one wants to strikethough texts

\setlength{\textwidth}{\paperwidth} 
\addtolength{\textwidth}{-2.5in}
\calclayout

\usepackage{setspace}

\newtheorem{Thm}{Theorem}[section]
\newtheorem{Lem}[Thm]{Lemma}
\newtheorem{Cor}{Corollary}[Thm]
\newtheorem{Prop}[Thm]{Proposition}

\theoremstyle{definition}

\newtheorem{eg}[Thm]{Example}

\theoremstyle{remark}
\newtheorem{Rmk}{Remark}[Thm]

\newenvironment{pf}{\begin{proof}}{\end{proof}}

\numberwithin{equation}{section}

% notations about fields/rings/sheves

\newcommand{\Z}{\mathbb{Z}}
\newcommand{\Q}{\mathbb{Q}}

\newcommand{\OO}{\mathcal{O}}

\newcommand{\p}{\mathfrak{p}}
\newcommand{\bp}{\mathfrak{P}}

\newcommand{\cP}{\mathcal{P}}

% notations about operations

\newcommand{\id}{\operatorname{id}}

\newcommand{\Nm}{\operatorname{Nm}}

\newcommand{\Gal}{\mathrm{Gal}}

% notations about algebraic groups

% notations about Lie algebras

% some special underline notations

\newcommand{\RNum}[1]{\uppercase\expandafter{\romannumeral #1\relax}}
\def\lrd{\overwithdelims()}

\DeclareFontFamily{U}{wncy}{}
\DeclareFontShape{U}{wncy}{m}{n}{<->wncyr10}{}
\DeclareSymbolFont{mcy}{U}{wncy}{m}{n}
\DeclareMathSymbol{\Sha}{\mathord}{mcy}{"58} 

%%Comments shortcuts%%%%%%%%%%%%%%%%

\newcommand{\duan}[1]{{\color{blue} \sf  [LD :#1]}}

\newcommand{\kelly}[1]{{\color{orange} \sf [KE :#1]}}
\newcommand{\xw}[1]{{\color{cyan} \sf  [XW: #1]}}

%%%%%%%%%%%%%%%%%%%%%%%%%%%%%

\title[The Hilbert exact sequence \& the principal Chebotarev density theorem]{Nonsplitting of the Hilbert exact sequence and the principal Chebotarev density theorem}
\author{Lian Duan}
\author{Kelly Emmrich}
\author{Ning Ma}
\author{Xiyuan Wang}

\address[Lian Duan]{Department of Mathematics, Colorado State University, Fort Collins, Colorado 80523, USA}
\email{lian.duan@colostate.edu}

\address[Kelly Emmrich]{Department of Mathematics, Colorado State University, Fort Collins, Colorado 80523, USA}
\email{kelly.emmrich@colostate.edu }

\address[Ning Ma]{Department of Mathematics, State University of New York at Buffalo, Buffalo, NY 14260, USA}
\email{nma22@buffalo.edu}

\address[Xiyuan Wang]{Department of Mathematics, The Ohio State University, Columbus, OH 43210, USA}
\email{wang.15476@osu.edu}

%\date{\today}
%\thanks{email: l.duanzwz@gmail.com}

\makeatletter
\@namedef{subjclassname@2020}{\textup{}2020 Mathematics Subject Classification}
\makeatother
\subjclass[2020]{11R45, 11Y40}
\keywords{Chebotarev density theorem, Hilbert short exact sequence, Ideal class group, Principal order}

\begin{document}

\begin{abstract}
Let $K/k$ be a finite Galois extension of number fields, and let $H_K$ be the Hilbert class field of $K$. We find a way to verify the nonsplitting of the short exact sequence 
$$
1\to Cl_K\to \Gal(H_K/k){\to}\Gal(K/k)\to 1
$$
by finite calculation. Our method is based on the study of the principal version of the Chebotarev density theorem, which represents the density of the prime ideals of $k$ that factor into the product of principal prime ideals in $K$. We also find explicit equations to express the principal density in terms of the invariants of $K/k$. In particular, we prove that the group structure of the ideal class group of $K$ can be determined by reading the principal densities. 
\end{abstract}

\maketitle
%\tableofcontents

\section{Introduction}\label{Sect: motivation}
Let $K/k$ be a finite Galois extension of number fields, with Galois group $\Gal(K/k)$. Take $H_K$ to be the Hilbert class field of $K$, that is, the maximal unramified abelian extension of $K$. One can check that $H_K$ is also Galois over $k$; let $\Gal(H_K/k)$ be the associated Galois group. Then there is a natural restriction map $\pi: \Gal(H_K/k)\to \Gal(K/k), \tau \mapsto \tau|_{K}$, whose kernel is isomorphic to the class group $Cl_K$ of $K$. Hence we obtain the \emph{Hilbert exact sequence} (HES):
\begin{equation}\label{Eqn: Hilbert_exact_seq}
1\to Cl_K\to \Gal(H_K/k)\overset{\pi}{\to}\Gal(K/k)\to 1.
\end{equation}
The splitting of the Hilbert exact sequence \eqref{Eqn: Hilbert_exact_seq} has been studied by several people. It was once believed by Herz that this sequence always splits when $k=\Q$ \cite{Herz-Cls-fld-1966}. However, Wyman \cite{Wyman-Hilbert-cls-fld-1973} and Gold \cite{Gold-Hilbert-cls-fld-1977} showed this is in general not true unless $k$ has class number one and $K/k$ is cyclic.  Evidence given by Cornell and Rosen \cite[$\S$~2]{Cornell-Rosen-splitting-Hilbert-cls-fld-1988} implies that the splitting of the sequence will fail if $K/k$ is not cyclic. However, for a concrete Galois extension $K/k$, the splitting of its HES is not easy to check since it is determined by its image in group cohomology $H^2(Cl_K, \Gal(K/k))$. Therefore, one motivation of this paper is to provide an algorithm which could verify the nonsplitting of the HES for certain $K/k$. Such a method is described in our first main result as well as explained in Section \ref{Sect: test_non_split} via a concrete example.

\begin{Thm}[Theorem~\ref{Thm: effective_bound}]\label{Thm: effective_test_nonsplit}
Fix a Galois extension $K/k$. There is a bound $B_{K}$ which only depends on  $n=[K:\Q]$, the  absolute discriminant $|\Delta_K|$ of $K$ and the class number $h_K$ of $K$, such that if any conjugacy class $C$ of $\Gal(K/k)$ cannot be realized by the Frobenius class $\left(\frac{K/k}{\p}\right)$ for at least one unramified prime $\p$ of $k$ satisfying:
	\begin{enumerate}
		\item $N_{K/\Q}(\p)\leq B_{K}$, and 
		\item $\p$ is a product of principal prime ideals in $K$,
	\end{enumerate}
then the Hilbert exact sequence 
$$
1\to Cl_K\to \Gal(H_K/k)\to \Gal(K/k)\to 1
$$
does {not} split. In particular, under the assumption of the Generalized Riemann Hypothesis (GRH), one can take 
$$
 B_K= (4h_K \log |\Delta_{K}|+ n\cdot h_K+5 )^2. 
$$
\end{Thm}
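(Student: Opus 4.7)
The plan is to argue the contrapositive: assuming the Hilbert exact sequence splits, I will show that every conjugacy class $C$ of $\Gal(K/k)$ is realized by the Frobenius class of some unramified prime $\p$ of $k$ with $N_{K/\Q}(\p)\leq B_K$ that factors into principal primes in $K$. The bridge between the splitting hypothesis and the principal-factoring condition is class field theory applied to the tower $H_K/K/k$: the Artin isomorphism $\Gal(H_K/K)\simeq Cl_K$ identifies triviality of the Frobenius of a prime $\P$ of $K$ with principality of $\P$. Consequently, if $\bp$ is a prime of $H_K$ over $\p$, $\P=\bp\cap K$, and $\sigma$ is the Frobenius of $\bp$ in $\Gal(H_K/k)$, then multiplicativity of residue degree gives $\mathrm{ord}(\sigma)/\mathrm{ord}(\pi(\sigma))=f(\bp/\P)$, which equals the order of $[\P]\in Cl_K$. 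Hence $\p$ factors into principal primes in $K$ if and only if $\mathrm{ord}(\sigma)=\mathrm{ord}(\pi(\sigma))$ (a condition independent of the choice of $\bp$ over $\p$).

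For each conjugacy class $C\subset\Gal(K/k)$, I would define $T_C\subset\Gal(H_K/k)$ to be the set of $\sigma$ with $\pi(\sigma)\in C$ and $\mathrm{ord}(\sigma)=\mathrm{ord}(\pi(\sigma))$. Both conditions are conjugation-invariant, so $T_C$ is a union of $\Gal(H_K/k)$-conjugacy classes. If the HES splits via a section $s:\Gal(K/k)\to\Gal(H_K/k)$ and $\tau\in C$, then $s(\tau)$ has the same order as $\tau$ and projects to $\tau$, so $s(\tau)\in T_C$; in particular $T_C$ contains at least one full conjugacy class $\widetilde{C}$ of $\Gal(H_K/k)$.

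Next I would apply an effective, GRH-conditional Chebotarev density theorem (of Lagarias--Odlyzko--Bach type) to $H_K/k$ and $\widetilde{C}$: this yields an unramified prime $\p$ of $k$ with Frobenius class $\widetilde{C}$ in $\Gal(H_K/k)$ and $N_{K/\Q}(\p)\leq (4\log|\Delta_{H_K}|+[H_K:\Q]+5)^2$, where the precise constants come from the specific effective bound invoked. Since $H_K/K$ is everywhere unramified, the tower formula for discriminants gives $|\Delta_{H_K}|=|\Delta_K|^{h_K}$, and clearly $[H_K:\Q]=nh_K$. Substituting yields $B_K=(4h_K\log|\Delta_K|+nh_K+5)^2$. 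By the characterization in the first paragraph, this $\p$ has Frobenius class $C$ in $\Gal(K/k)$ and factors into principal primes in $K$, completing the contrapositive.

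The main technical obstacle is the explicit effective Chebotarev bound with the exact constants appearing in the theorem; the class-field-theoretic translation and the verification that splitting forces every $T_C$ to be nonempty are essentially formal. One subtlety worth flagging is that this argument uses only the implication (HES split $\Rightarrow$ every $T_C$ nonempty) — the converse can fail, since order-preserving lifts of individual conjugacy classes need not assemble into a homomorphic section — so the resulting algorithm rules out splitting but does not certify it.
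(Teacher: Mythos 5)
Your proposal is correct and follows essentially the same route as the paper: identify ``$\p$ factors principally'' with ``the Frobenius in $\Gal(H_K/k)$ has the same order as its image in $\Gal(K/k)$'' via the Artin isomorphism, observe that a section of the HES produces an order-preserving lift of each class (so your $T_C$, the paper's set $C_1$, contains a full conjugacy class of $\Gal(H_K/k)$), and then apply the GRH-effective Chebotarev bound of Bach--Sorenson to $H_K/k$ together with $[H_K:\Q]=nh_K$ and $|\Delta_{H_K}|=|\Delta_K|^{h_K}$ (the latter from $\mathfrak{D}_{H_K/K}=(1)$). The only difference is cosmetic: the paper routes the ``split $\Rightarrow$ every class realized'' step through its density machinery ($\mu^1_{K/k}(C)>0$ and Corollary~\ref{Cor: sufficient_cond_mu_1,K>0}), whereas you argue it directly from the section, and your closing remark about the one-sided nature of the test matches the paper's intent.
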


The proof of Theorem~\ref{Thm: effective_test_nonsplit} depends on a refined version of the Chebotarev density theorem. We follow the above notations and take $G=\Gal(K/k)$ and  $E=\Gal(H_K/k)$ for simplification. Denote by $\mathcal{P}_K$ (resp. $\cP_k$) the set of prime ideals of the ring of integers $\OO_K$ (resp. $\OO_k$). For a prime ideal $\mathfrak{P}\in \mathcal{P}_K $, denote by  $N_{K/k}\mathfrak{P}\in \mathcal{P}_k$ the relative norm map (and simply write $N\mathfrak{P}$ if $k=\Q$). For a given conjugacy class $C$ of $G$,  and $\p\in \mathcal{P}_k$ unramified in $K$, let $\cP_{k,C}\coloneqq \left\{\p\in\cP_k | \left(\frac{K/k}{\p}\right)=C\right\}$ be the subset of $\mathcal{P}_k$ which consists of unramfied prime ideals whose associated Frobenius class is equal to $C$.  Recall the definition of the natural density of $\cP_{k,C}$ 
$$
    \mu_{K/k}(C)\coloneqq \lim\limits_{N\to \infty}\frac{\#\left\{\p\in\cP_k |N\p\leq N, \left(\frac{K/k}{\p}\right)=C\right\}}{\#\{\p\in \cP_k|N\p\leq N\}}.
$$
The Chebotarev density theorem states that
\begin{equation}\label{Eqn: classical_Chebotarev}
    \mu_{K/k}(C)=\frac{|C|}{|G|}.
\end{equation}

In our paper, we want to refine $\mu_{K/k}(C)$ into a version which represents the density of primes $\p$ of $k$ which realize the desired conjugacy class $C$ and factor principally in $K$, i.e, we want to consider the following density (if it exists)
\begin{equation}\label{Eqn: principal_density_order_1}
    \mu^1_{K/k}(C)\coloneqq \lim_{N\to \infty}\frac{\#\left\{\p\in\cP_k|N\p\leq N, \left(\frac{K/k}{\p}\right)=C,  \mathfrak{P} \text{ is principal}\right \}}{\#\{\p\in\cP_{k}|N\p\leq N\}},
\end{equation}
where $\mathfrak{P}$ is a prime ideal of $K$ lying above $\p$.  

In Section~\ref{Sect: well_defined}, we will generalize $\mu^1_{K/k}(C)$ as defined in \eqref{Eqn: principal_density_order_1} to $\mu^m_{K/k}(C)$ for every positive integer $m$, and prove they are all well defined. In Section~\ref{Sect: explicit_formula} we describe each of these such densities $\mu^1_{K/k}(C)$ (more generally, $\mu^m_{K/k}(C)$) according to the invariants of $K/k$. To be more precise, for every element $g\in G$, denote by $E_{g}\coloneqq \pi^{-1}(\langle g \rangle)\subset E$, then there is a sub short exact sequence \begin{equation}\label{Eqn: sub_seq_sect_1}
    1\to Cl_K\to E_{g}\to \langle g\rangle\to 1.
\end{equation}
Our next result concerns whether or not $\mu^1_{K/k}(C)$ is positive. 
\begin{Thm}[Theorem \ref{Thm: iff_cond_mu_1,K>0}]\label{Thm: main_thm_positive_density}
Fix a conjugacy class $C$ of $G$, the density $\mu^1_{K/k}(C)>0$ if and only if there exists an element $g\in C$ such that \eqref{Eqn: sub_seq_sect_1} splits.

As a consequence, $\mu^1_{K/k}(C)>0$ for all conjugacy classes if and only if for every maximal cyclic subgroup $U$ of $G$, we have 
$$
    1\to Cl_K\to \pi^{-1}(U)\to U\to 1
$$
splits. In particular, if $K/k$ is cyclic and $k$ has class number one, then $\mu_{K/k}^m(C)>0$ for any $C$ and $m$.  
\end{Thm}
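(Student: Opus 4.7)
The plan is to translate the ``principally factors'' condition into a condition on Frobenius classes in the larger Galois extension $H_K/k$, then invoke the classical Chebotarev density theorem \eqref{Eqn: classical_Chebotarev} for $H_K/k$, and finally read off when the resulting count is nonzero as a splitting statement for \eqref{Eqn: sub_seq_sect_1}.

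The first and conceptually key step is a class field theory dictionary. Given $\p\in\cP_k$ unramified in $K$ (hence also in $H_K$, since $H_K/K$ is unramified), I would fix a prime $\tilde{\bp}$ of $H_K$ above $\p$, and set $\bp=\tilde{\bp}\cap\OO_K$, $\sigma=\left(\frac{H_K/k}{\tilde{\bp}}\right)\in E$, and $g=\pi(\sigma)=\left(\frac{K/k}{\bp}\right)\in G$. With $f=\operatorname{ord}(g)$ the residue degree of $\bp/\p$, the power $\sigma^{f}$ is the Frobenius of $\bp$ in the unramified abelian extension $H_K/K$, and under the Artin isomorphism $\Gal(H_K/K)\cong Cl_K$ it corresponds to the ideal class $[\bp]$. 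Therefore $\bp$ is principal if and only if $\sigma^{f}=1$, equivalently $\operatorname{ord}(\sigma)=\operatorname{ord}(\pi(\sigma))$. Verifying this translation cleanly (including independence of the chosen $\tilde{\bp}$ up to $E$-conjugation, and verifying that this condition really is the same for every prime of $K$ lying over $\p$) is the main technical content and the chief obstacle; the rest is pure group theory.

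With this dictionary in hand, define
$$
T_C\coloneqq\bigl\{\sigma\in E \ : \ \pi(\sigma)\in C,\ \operatorname{ord}(\sigma)=\operatorname{ord}(\pi(\sigma))\bigr\}\subset E,
$$
which is stable under $E$-conjugation and hence a union of $E$-conjugacy classes. The dictionary identifies $\mu^1_{K/k}(C)$ with the natural density of primes $\p$ of $k$ whose $H_K/k$-Frobenius class lies in $T_C$, so \eqref{Eqn: classical_Chebotarev} applied to $H_K/k$ yields
$$
\mu^1_{K/k}(C)=\frac{|T_C|}{|E|},
$$
and hence $\mu^1_{K/k}(C)>0$ if and only if $T_C\ne\emptyset$. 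A standard $E$-conjugation (replace $\sigma'$ by $\tau^{-1}\sigma'\tau$, where $\pi(\tau)$ carries $\pi(\sigma')$ to a chosen $g\in C$) reduces $T_C\ne\emptyset$ to the existence of some $g\in C$ with a lift $\sigma\in E_g$ satisfying $\pi(\sigma)=g$ and $\operatorname{ord}(\sigma)=\operatorname{ord}(g)$. Such a $\sigma$ is precisely $s(g)$ for a section $s\colon\langle g\rangle\to E_g$ of \eqref{Eqn: sub_seq_sect_1}: the formula $g^i\mapsto \sigma^i$ is well-defined because $\sigma^{\operatorname{ord}(g)}=1$ and is tautologically $\pi$-equivariant, hence a splitting; conversely any splitting $s$ produces $\sigma=s(g)$. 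This proves the main equivalence.

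For the consequence, every cyclic subgroup of $G$ lies in some maximal cyclic subgroup $U$, and any section $U\to\pi^{-1}(U)$ restricts to a section $\langle g\rangle\to E_g$ for each $g\in U$; the reverse implication is automatic, so splitting over every $\langle g\rangle$ is equivalent to splitting over every maximal cyclic $U$. Finally, when $K/k$ is cyclic and $h_k=1$, the full Hilbert exact sequence splits by the Wyman--Gold result recalled in the introduction, so every pullback sequence splits, and the analogous positivity of $\mu^m_{K/k}(C)$ for all $m$ will follow once the dictionary in the first step is refined to detect the order of $[\bp]$ in $Cl_K$, as is carried out in Section~\ref{Sect: explicit_formula}.
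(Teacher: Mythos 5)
Your proposal is correct and follows essentially the same route as the paper: the Artin-map dictionary plus Chebotarev for $H_K/k$ is the paper's Proposition~\ref{Prop: density_well_defn} (your $T_C$ is its set $C_1$), and the identification of an order-preserving lift of $g$ with a section of \eqref{Eqn: sub_seq_sect_1}, together with the conjugation and restriction arguments for the maximal-cyclic reformulation, is exactly the paper's proof of Theorem~\ref{Thm: iff_cond_mu_1,K>0}. The only remark worth making is that the final claim about $\mu^m_{K/k}(C)$ for all $m$ needs no refined dictionary: it follows at once from the monotonicity $\mu^1_{K/k}(C)\leq\mu^m_{K/k}(C)$ of Lemma~\ref{Prop: basic_properties_density}.
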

\begin{Rmk}
Theorem~\ref{Thm: iff_cond_mu_1,K>0} is a general version for $\mu^m_{K/k}(C)$. 
\end{Rmk}

Assume that $\mu^1_{K/k}(C)>0$, we also formulate a formulae which explain the densities $\mu_{K/k}^m(C)$ in the style of \eqref{Eqn: classical_Chebotarev}. We summarize it as our third main result. 

\begin{Thm}[Theorem~\ref{Thm: genus_fld_homology_density}]\label{Thm: main_thm_formula}
For a conjugacy class $C$ assume $\mu_{K/k}^1(C)>0$, then 
\begin{equation}\label{Eqn: main_formula_general_C}
    \mu_{K/k}^1(C)=\frac{|C|}{|G|}\frac{|H^1(\langle g\rangle, Cl_K)|}{[K_F:K]}
\end{equation}
where $g$ is an arbitrary element in $C$, $F$ is the fixed field by $g$ and the intermediate field $H_K\supset K_F\supset K$ is the \emph{genus field\footnote{For more details about genus fields, one is referred to \cite{Furuta-genus-fld-1967}.} of $K$ over $F$}. In particular, $\mu_{K/k}^1(\id_G)={|C|}/({|G|}{h_K})$, where $h_K$ is the class number of $K$. 
\end{Thm}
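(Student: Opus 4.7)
The plan is to apply the classical Chebotarev density theorem to the larger extension $H_K/k$, translate the principality of $\mathfrak{P}$ into an order condition on the Frobenius in $E=\Gal(H_K/k)$, and then repackage the resulting count using Tate cohomology of $\langle g\rangle$ together with the genus field $K_F$. First I would fix $g\in C$ of order $f=|\langle g\rangle|$. For an unramified prime $\p\in\cP_k$ with $\mathfrak{P}\mid\p$ in $K$ and $\mathfrak{q}\mid\mathfrak{P}$ in $H_K$, the Frobenius $\sigma_{\mathfrak{q}}\in E$ satisfies $\pi(\sigma_{\mathfrak{q}})\in C$ precisely when $\p\in\cP_{k,C}$, and $\sigma_{\mathfrak{q}}^{f}$ is the Frobenius of $\mathfrak{q}/\mathfrak{P}$, which the Artin isomorphism $\Gal(H_K/K)\cong Cl_K$ identifies with the class $[\mathfrak{P}]$. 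Hence $\mathfrak{P}$ is principal if and only if $\sigma_{\mathfrak{q}}^{f}=1$, so Chebotarev applied to $H_K/k$ gives
\begin{equation*}
\mu^1_{K/k}(C)=\frac{1}{|E|}\,\#\bigl\{\tilde{g}\in E:\pi(\tilde{g})\in C,\ \tilde{g}^{f}=1\bigr\}.
\end{equation*}

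Next I would count this set by partitioning over $g'\in C$ and parametrizing the fibre $\pi^{-1}(g')=\tilde{g}'\cdot Cl_K$ via a chosen lift $\tilde{g}'$. A short induction using the conjugation rule $\tilde{g}'c(\tilde{g}')^{-1}=g'(c)$ gives $(\tilde{g}'c)^{f}=(\tilde{g}')^{f}\cdot N_{g}(c)$, where $N_{g}=\sum_{i=0}^{f-1}g^{i}$ is the norm endomorphism acting on $Cl_K$. The assumption $\mu^1_{K/k}(C)>0$ combined with Theorem~\ref{Thm: iff_cond_mu_1,K>0} supplies a lift of some element of $C$ of order exactly $f$, so $(\tilde{g}')^{f}$ lies in the image of $N_{g}$ and the solutions $c$ form a coset of $\ker N_{g}$. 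Since $|\ker N_{g}|$ is invariant under replacing $g$ by a conjugate, each of the $|C|$ elements contributes $|\ker N_{g}|$ lifts and we obtain
\begin{equation*}
\mu^1_{K/k}(C)=\frac{|C|\cdot|\ker N_{g}|}{|G|\cdot|Cl_K|}.
\end{equation*}

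Finally I would rewrite $|\ker N_{g}|/|Cl_K|$ in the cohomological form required. Because $\langle g\rangle$ is cyclic, Tate cohomology yields $H^{1}(\langle g\rangle, Cl_K)\cong \ker N_{g}/(g-1)Cl_K$, so $|\ker N_{g}|=|H^{1}(\langle g\rangle, Cl_K)|\cdot|(g-1)Cl_K|$. Then, transported through the Artin identification $\Gal(H_K/K)\cong Cl_K$, a direct commutator computation inside $\Gal(H_K/F)=\pi^{-1}(\langle g\rangle)$ shows $[\Gal(H_K/F),\Gal(H_K/F)]=(g-1)Cl_K$; consequently the maximal abelian subextension of $H_K/F$—which is exactly the genus field $K_F$—satisfies $\Gal(K_F/K)\cong Cl_K/(g-1)Cl_K$ and $[K_F:K]=|Cl_K|/|(g-1)Cl_K|$. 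Substituting yields the main formula, and the specialization to $C=\{\id_G\}$ is immediate: $f=1$, $\ker N_{g}=\{1\}$, and $\mu^1_{K/k}(\{\id_G\})=1/(|G|h_K)$.

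The main obstacle I expect is the field-theoretic identification in the third step: verifying via class field theory that the classical genus field $K_F$ coincides with the maximal abelian subextension of $H_K/F$, and that under the Artin map $\Gal(K_F/K)$ corresponds precisely to the coinvariants $Cl_K/(g-1)Cl_K$. Once this dictionary between Galois theory and cohomology is set up, the remaining pieces are routine combinations of Chebotarev with the norm map of the cyclic group $\langle g\rangle$.
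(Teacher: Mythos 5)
Your proposal is correct and follows essentially the same route as the paper: Chebotarev applied to $H_K/k$ with principality translated into the order condition $\tilde g^{f}=1$ (the paper's Proposition~\ref{Prop: density_well_defn}), the fibrewise count via the norm map $N_g$ giving $\mu^1_{K/k}(C)=\tfrac{|C|\,|\ker N_g|}{|G|\,h_K}$ (Proposition~\ref{Prop: density_kernel_relation}), and the identification $\hat H^1(\langle g\rangle,Cl_K)\cong\ker N_g/(g-1)Cl_K$ together with $(g-1)Cl_K=[E_g,E_g]$ cutting out the genus field (Theorem~\ref{Thm: genus_fld_homology_density}). The only point to make fully explicit is that every $g'\in C$, not just one, admits a lift of order exactly $f$ (obtained by conjugating the one supplied by Theorem~\ref{Thm: iff_cond_mu_1,K>0}), which is exactly how the paper arranges the decomposition $C_1=\bigsqcup_{g'\in C}\ker(N_{\sigma_{g'},1})\sigma_{g'}$.
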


\begin{Rmk}
With our generalized densities $\mu^m_{K/k}(C)$ defined in Section~\ref{Sect: well_defined} we can determine the group structure of the class group $Cl_K$ by reading the associated densities, i.e., taking $C=\{\id_G\}$ to be the trivial class, for every prime power $p^r$ we have
\begin{equation}\label{Eqn: main_formula_id}
    \frac{\mu_{K/k}^{p^r}(\{\id_G\})}{\mu_{K/k}^{p^{r-1}}(\{\id_G\})} =\frac{|Cl_K[p^r]|}{|Cl_K[p^{r-1}]|},
\end{equation}
where for any positive integer $m$, $Cl_K[m]\coloneqq \{x\in Cl_K| x^m=\id\}$. 
\end{Rmk}

\subsection*{Acknowledgements} We would like to thank Jeff Achter, Rachel Pries, Jun Wang and Siman Wong for their helpful insights and suggestions throughout this project.

\section{The robustness of the definition of the principal density}\label{Sect: well_defined}
As in Section~\ref{Sect: motivation}, we have Galois extensions $K/k$, $H_K/K$ and $H_K/k$ with Galois groups $G\coloneqq \Gal(K/k)$, $Cl_K$ and $E\coloneqq\Gal(H_K/k)$, respectively. For any unramified $\p$ of $k$ lying below a prime ideal $\mathfrak{P}$ of $K$, we define the \emph{$K/k$-principal order} of $\p$ to be the smallest positive integer $n_{K/k, \p}$, such that $\bp^{n_{K/k, \p}}$ is principal in $K$. In this case, we say $\p$ is of $K/k$-principal order $n_{K/k, \p}$. Note that this definition is independent of the choice of $\bp$. With this definition, we can consider the following corresponding density (if it exists) for every positive integer $m$, 

\begin{equation}\label{Eqn: principal_density_order_m}
    \mu_{K/k}^{m}(C)\coloneqq \lim_{N\to \infty}\frac{\#\left\{\p\in\cP_k|N\p\leq N, \left(\frac{K/k}{\p}\right)=C,  n_{K/k, \p}| m\right \}}{\#\{\p\in\cP_{k}|N\p\leq N\}}.
\end{equation}
Notice that if we take $m=1$, then this recovers the principal density \eqref{Eqn: principal_density_order_1} in Section~\ref{Sect: motivation}. In Proposition~\ref{Prop: density_well_defn} we will prove that the density $\mu_{K/k}^m(C)$ is well defined for all conjugacy classes $C\subset G$.

\begin{Rmk}\label{Rmk: Theta_density}
For each positive integer $m$, we also can define
\[
\theta_{K/k}^{m}(C)\coloneqq \lim_{N\to \infty}\frac{\#\left\{\p\in\cP_k|N\p\leq N, \left(\frac{K/k}{\p}\right)=C,  n_{K/k, \p}= m\right \}}{\#\{\p\in\cP_{k}|N\p\leq N\}}.
\]
There is a close relation between $\theta_{K/k}^{m}(C)$ and $\mu_{K/k}^{m}(C)$, and almost every result in this section and Section~\ref{Sect: explicit_formula} has a $\theta_{K/k}^{m}(C)$ version.
\end{Rmk}

For a prime ideal $\p\subset \mathcal{O}_k$ which is unramified in $H_K$, assume $\p \OO_K=\prod_{i} \bp_i$. Then we have the next two equations of Frobenius conjugacy classes 
\begin{equation}\label{Eqn: Frobs}
    \pi\left(\frac{H_K/k}{\p}\right)= \left(\frac{H_K/k}{\p}\right)\Big|_K=\left(\frac{K/k}{\p}\right), \quad \left\{ \sigma^{f_{\p}}\Big| \sigma\in \left(\frac{H_K/k}{\p}\right) \right\}=\bigcup_{i}\left(\frac{H_K/K}{\bp_i}\right),
\end{equation}
where $f_\p=[\OO_K/\bp: \OO_k/\p]$ is the inertial degree of $\p$ in $K$. For an element $a$ in a finite group $A$, we denote its order to be $d_A(a)$. By the fact that every element in a conjugacy class $C\subset A$ has the same order, we refer to the common order as $d_A(C)$. 
\begin{Prop}\label{Prop: density_well_defn}
For every conjugacy class $C$ of $G$, and every positive integer $m$, the density $\mu_{K/k}^m(C)$ is well defined and
\begin{equation}\label{Eqn: prin_density_Hilbert_form}
    \mu^m_{K/k}(C)=\frac{|\{\sigma\in E| \pi(\sigma)\in C \text{ and }\sigma^{d_{G}(C)m}=\id_E\}|}{|E|}.
\end{equation}
\end{Prop}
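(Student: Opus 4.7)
The plan is to recognize that both conditions defining the principal density can be detected from the Frobenius class of $\p$ in the larger extension $H_K/k$, and then to invoke the classical Chebotarev density theorem applied to $H_K/k$.

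First I would exploit the Artin reciprocity isomorphism $Cl_K\simeq\Gal(H_K/K)$. Because $\Gal(H_K/K)$ is abelian, the Artin symbol $\left(\frac{H_K/K}{\bp}\right)$ is a single element (not just a conjugacy class), independent of the choice of $\bp\mid\p$, and corresponds to the ideal class $[\bp]\in Cl_K$. In particular, $n_{K/k,\p}\mid m$ is equivalent to $\bp^{m}$ being principal, and hence to $\left(\frac{H_K/K}{\bp}\right)^{m}=\id$. This also confirms the remark, stated just after the definition, that $n_{K/k,\p}$ really is an invariant of $\p$ alone.

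Next I would translate this principality condition into a condition on an arbitrary lift $\sigma\in\left(\frac{H_K/k}{\p}\right)\subset E$, via the compatibility \eqref{Eqn: Frobs}. Any such $\sigma$ satisfies $\pi(\sigma)\in\left(\frac{K/k}{\p}\right)$, and $\sigma^{f_{\p}}$ realizes the Artin symbol $\left(\frac{H_K/K}{\bp}\right)$ for some $\bp\mid\p$. For $\p$ unramified in $K$, the residue degree $f_{\p}$ equals the order in $G$ of $\pi(\sigma)$; combined with the requirement $\left(\frac{K/k}{\p}\right)=C$, this forces $f_{\p}=d_G(C)$, so the condition $n_{K/k,\p}\mid m$ becomes exactly $\sigma^{d_G(C)m}=\id_E$.

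Finally, let $S\subset E$ denote the set of $\sigma$ with $\pi(\sigma)\in C$ and $\sigma^{d_G(C)m}=\id_E$. Both conditions are conjugation-invariant in $E$ (the first because $C$ is a conjugacy class of $G$, the second because element orders are preserved under conjugation), so $S$ is a disjoint union of conjugacy classes $\tilde C_1,\dots,\tilde C_r\subset E$. By the translations above, a prime $\p$ unramified in $K$ (equivalently unramified in $H_K$, since $H_K/K$ is everywhere unramified) contributes to the numerator of \eqref{Eqn: principal_density_order_m} exactly when its Frobenius class in $H_K/k$ lies in $\{\tilde C_1,\dots,\tilde C_r\}$. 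Since the ramified primes form a finite (hence density-zero) set, the classical Chebotarev density theorem for $H_K/k$ then gives
$$
\mu^m_{K/k}(C)=\sum_{i=1}^{r}\frac{|\tilde C_i|}{|E|}=\frac{|S|}{|E|},
$$
yielding both existence of the limit and the claimed formula. I expect the main obstacle to be the careful bookkeeping in the second step, especially checking that the truth of $\sigma^{d_G(C)m}=\id_E$ is insensitive to the various choices (of $\sigma$ in the Frobenius class of $\p$ and of $\bp$ above $\p$); once that is in place, the conclusion is immediate.
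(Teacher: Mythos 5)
Your proposal is correct and follows essentially the same route as the paper's proof: translate both defining conditions into membership of the Frobenius class of $\p$ in a conjugation-invariant subset of $E=\Gal(H_K/k)$, and then apply the classical Chebotarev density theorem to $H_K/k$ to sum $|\tilde C_i|/|E|$ over the constituent conjugacy classes. The only cosmetic difference is that the paper organizes the sum by the principal order $n\mid m$ before collapsing it to the set $C_m$, whereas you characterize that set directly.
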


\begin{pf}
Note that $\bp$ is of principal order $n$ if and only if $d_{Cl_K}\left(\frac{H_K/K}{\bp}\right) =n$, or equivalently, if and only if $d_E\left(\frac{H_K/k}{\p}\right) = d_G\left(\frac{K/k}{\p}\right)n$. We have
\[
\begin{split}
    \mu_{K/k}^{m}(C) & = \lim_{N\to \infty}\sum_{n|m}\frac{\#\left\{\p\in\cP_k \big\vert Nm_{k/\Q}(\p)\leq N, \left(\frac{K/k}{\p}\right)=C,  d_E\left(\frac{H/k}{\p}\right)=d_G(C)n \right\}}{\#\{\p\in\cP_{k}|Nm_{k/\Q}(\p)\leq N\}}  \\
    &= \sum_{n|m}\sum_{\substack{\pi(C')=C\\ d_E(C')=d_G(\pi(C'))n }}\lim_{N\to \infty} \frac{\#\left\{\p\in\cP_k \big\vert Nm_{k/\Q}(\p)\leq N, \left(\frac{H_K/k}{\p}\right)=C' \right\}}{\#\{\p\in\cP_{k}|Nm_{k/\Q}(\p)\leq N\}} \\
    &=\sum_{n|m}\sum_{\substack{\pi(C')=C\\ d_E(C')=d_G(\pi(C'))n }} \frac{|C'|}{|E|}\\
    &=\frac{|\{\sigma\in E| \pi(\sigma)\in C \text{ and }\sigma^{d_{G}(C)m}=\id_E\}|}{|E|}.
\end{split}
\]
Where $C'$ is a conjugacy class of $E$.
\end{pf}

By the above proposition and its proof, one sees that the density $\mu^m_{K/k}(C)$ is totally determined by the set (which could be empty)
\begin{equation}\label{Eqn: defn_sets_C_m}
    C_m\coloneqq \{\sigma\in E| \pi(\sigma)\in C \text{ and }\sigma^{d_{G}(C)m}=\id_E\}.
\end{equation}

Before leaving this section, we list some basic properties of the densities which will be used later.

\begin{Lem}\label{Prop: basic_properties_density}
Let $h_K$ be the class number of $K$ and $C$ be a conjugacy class of $G$.
\begin{enumerate}
    \item We have $\mu_{K/k}^{h_K}(C)=\mu_{K/k}(C)$.
    \item If $m_1\mid m_2$, then $\mu_{K/k}^{m_1}(C)\leq \mu_{K/k}^{m_2}(C)$.
    \item For every $m>0$, let $m_{0}=\gcd(m, h_K)$. Then $\mu^m_{K/k}(C)=\mu^{m_0}_{K/k}(C)\leq \mu^{h_K}_{K/k}(C)$.  
\end{enumerate}
\end{Lem}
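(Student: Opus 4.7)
The plan is to reduce all three assertions to the explicit formula of Proposition~\ref{Prop: density_well_defn}, which writes $\mu^m_{K/k}(C) = |C_m|/|E|$ in terms of the sets $C_m$ introduced in \eqref{Eqn: defn_sets_C_m}. Each claim then becomes a set-theoretic comparison inside $E$, combined once with the classical Chebotarev identity $|\pi^{-1}(C)|/|E| = |C|/|G|$.

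For part (1), I would show that $C_{h_K} = \pi^{-1}(C)$. Indeed, if $\pi(\sigma)\in C$ then $\pi(\sigma^{d_G(C)}) = \id_G$, so $\sigma^{d_G(C)}$ lies in $\ker \pi = Cl_K$; since $|Cl_K|=h_K$, raising once more to the $h_K$-th power yields $\sigma^{d_G(C) h_K} = \id_E$. Hence $\mu^{h_K}_{K/k}(C) = |\pi^{-1}(C)|/|E| = |C|/|G| = \mu_{K/k}(C)$.

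Part (2) is immediate from the observation that $m_1\mid m_2$ forces $C_{m_1}\subseteq C_{m_2}$. For part (3), set $m_0=\gcd(m,h_K)$; since $m_0\mid m$, part (2) already gives $C_{m_0}\subseteq C_m$. The key step for the reverse inclusion is as follows: given $\sigma\in C_m$, put $\tau = \sigma^{d_G(C)}\in Cl_K$. The order of $\tau$ divides $m$ (because $\sigma\in C_m$) and also divides $h_K$ (because $\tau$ lives in a group of order $h_K$), so it divides $m_0$, and therefore $\sigma\in C_{m_0}$. This yields the equality $\mu^m_{K/k}(C) = \mu^{m_0}_{K/k}(C)$, and the remaining inequality $\mu^{m_0}_{K/k}(C)\leq \mu^{h_K}_{K/k}(C)$ then drops out of part (2) applied to the divisibility $m_0\mid h_K$.

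No step presents a real obstacle; the argument is essentially bookkeeping once Proposition~\ref{Prop: density_well_defn} is available. The only mildly substantive point is the elementary remark that an element of $Cl_K$ whose order divides $m$ must in fact have order dividing $\gcd(m,h_K)$, which is precisely why the principal density is sensitive only to this gcd.
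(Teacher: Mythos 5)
Your argument is correct. All three parts check out: $C_{h_K}=\pi^{-1}(C)$ because $\sigma^{d_G(C)}\in\ker\pi=Cl_K$ has order dividing $h_K$; the inclusion $C_{m_1}\subseteq C_{m_2}$ for $m_1\mid m_2$ is immediate; and your gcd step for part (3) is exactly the right observation. The paper itself gives no written proof --- it dismisses the lemma as an easy consequence of the definition \eqref{Eqn: principal_density_order_m} --- so the only difference worth noting is one of vantage point: the authors evidently intend the argument on the ideal-theoretic side (the $K/k$-principal order $n_{K/k,\p}$ is the order of $[\bp]$ in $Cl_K$, hence divides $h_K$, so the condition $n_{K/k,\p}\mid m$ is equivalent to $n_{K/k,\p}\mid\gcd(m,h_K)$ and is vacuous for $m=h_K$), whereas you pass through Proposition~\ref{Prop: density_well_defn} and work with the sets $C_m$ inside $E$. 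The two are the same bookkeeping transported across the Artin map; your version has the small advantage of not re-invoking any limit, and the small cost of depending on Proposition~\ref{Prop: density_well_defn} rather than on the definition alone. Either way the content is identical and your proof is complete.
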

\begin{pf}
These are easy consequences of the definition of $\mu_{K/k}^{m}(C)$ (See equation \eqref{Eqn: principal_density_order_m}).
\end{pf}

By this lemma, we are only interested in $\mu_{K/k}^{m}(C)$, where $m$ is a divisor of $h_{K}$.

\section{Explicit formula for the densities}\label{Sect: explicit_formula}

Although Proposition~\ref{Prop: density_well_defn} and its proof imply the densities only depend on the the conjugacy classes of $E$, an explicit formula in the style of \eqref{Eqn: classical_Chebotarev} is preferred. This section is devoted to find a concrete formula. First, in subsection~\ref{Sect: when_positive?}, Theorem~\ref{Thm: iff_cond_mu_1,K>0} shows that whether the density $\mu_{K/k}^m(C)$ is nonzero depends on whether some corresponding short exact sequence \eqref{Eqn: sub_seq} splits. Hence when $k$ has class number one, we know $\mu_{K/k}^m(C)>0$ for any $C$ and $m$ by Corollary~\ref{Cor: sufficient_cond_mu_1,K>0} and Proposition~\ref{Cor: Gold_splitting_Hilbert}. We formulate an explanation of $\mu_{K/k}^m(C)$ in terms of $|G|$, $|C|$, the first Tate cohomology and genus degree in Theorem~\ref{Thm: genus_fld_homology_density} and Corollary~\ref{Cor: density_formula_general_C}. In particular, when $C=\{\id_{G}\}$ is the trivial class in $G$, Corollary~\ref{Thm: class_group_density} says one can read the structure of the class group $Cl_K$ by $\mu_{K/k}^m(\{\id_G\})$ as $m$ varies. These formulas and their proofs are given in subsection~\ref{Sect: the_formula}.  

\subsection{When is $\mu_{K/k}^m(C)$ positive?}\label{Sect: when_positive?}
By the discussion at the end of Section \ref{Sect: well_defined}, we are only interested in $\mu_{K/k}^{m}(C)$ for $m$ a divisor of $h_K$. For any element $g\in G$, we can construct the associated subexact sequence (\ref{Eqn: sub_seq_sect_1}) of HES, 
\[
    1\to Cl_K\to E_{g}\to \langle g\rangle\to 1.
\]
Let $Cl^{0}_{K}[n]$ be the subgroup of $Cl_{K}$ generated by the elements of order exactly $n$. This group $Cl_{K}^{0}[n]$ may not exist. If it exists, there is a short exact sequence
\begin{equation}\label{Eqn: sub_seq}
    1\to Cl_K/Cl_{K}^{0}[n] \to E_{g}/Cl_{K}^{0}[n]\to \langle g\rangle\to 1.
\end{equation}

The following theorem concerns the case when $\mu_{K/k}^{m}(C)$ is positive.

\begin{Thm}\label{Thm: iff_cond_mu_1,K>0}
Fix a conjugacy class $C$ of $G$, the density $\mu^m_{K/k}(C)>0$ if and only if there exists a positive divisor $i$ of $m$ such that the short exact sequence 
\begin{equation}
    1\to Cl_K/Cl^{0}_{K}[i]\to E_{g}/Cl_{K}^{0}[i]\to \langle g\rangle\to 1
\end{equation}
exists and splits for some $g\in C$.

As a consequence, $\mu^m_{K/k}(C)>0$ for all conjugacy classes if and only if for every maximal cyclic subgroup $U$ of $G$, there exists a divisor  $i_{U}$ of $m$ such that the short exact sequence
\begin{equation}\label{Eqn: prop_iff_all_C}
    1\to Cl_K/Cl_{K}^{0}[i_U]\to \pi^{-1}(U)/Cl_{K}^{0}[i_U]\to U\to 1
\end{equation}
exists and splits.

\end{Thm}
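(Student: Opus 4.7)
The plan is to appeal to Proposition~\ref{Prop: density_well_defn}, which recasts positivity of $\mu^m_{K/k}(C)$ as the nonemptiness of the set
$$
C_m=\{\sigma\in E\mid \pi(\sigma)\in C,\ \sigma^{d_G(C)m}=\id_E\}.
$$
Hence the task is to translate, for each $g\in C$, between elements $\sigma\in E_g$ lifting $g$ with order dividing $d_G(g)\cdot m$ and splittings of the quotient sequences appearing in the statement. Throughout I will use that conjugation by $E_g$ acts on the abelian group $Cl_K$ by automorphisms, hence preserves the set of elements of exact order $i$; this makes $Cl_{K}^{0}[i]$ a normal subgroup of $E_g$ whenever it is nontrivial, and so the displayed quotient sequence is well defined.

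For the ``if'' direction, assume a section $s\colon \langle g\rangle\to E_g/Cl_{K}^{0}[i]$ exists for some $i\mid m$ and some $g\in C$. Lift $s(g)$ to $\sigma\in E_g$; then $\sigma^{d_G(g)}\in Cl_{K}^{0}[i]$. The crucial observation is that every product of elements of order exactly $i$ in an abelian group has order dividing $i$, so $Cl_{K}^{0}[i]\subseteq Cl_K[i]$. Therefore $\sigma^{d_G(g)i}=\id_E$, and since $i\mid m$, also $\sigma^{d_G(g)m}=\id_E$, exhibiting $\sigma\in C_m$.

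For the ``only if'' direction, pick any $\sigma\in C_m$ and set $g=\pi(\sigma)\in C$ and $c=\sigma^{d_G(g)}\in Cl_K$. Since $c^m=\id$, its exact order $j$ divides $m$; choose $i=j$. By construction $c\in Cl_{K}^{0}[i]$, so in $E_g/Cl_{K}^{0}[i]$ the image $\bar\sigma$ satisfies $\bar\sigma^{d_G(g)}=\overline{\id}$, while it still projects to $g$ and hence has order exactly $d_G(g)$. The assignment $g\mapsto \bar\sigma$ then defines a section splitting the quotient sequence.

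For the consequence I reduce class-by-class. Given splittings at the maximal-cyclic level with chosen exponents $i_U$, for any $g\in G$ choose a maximal cyclic $U\ni g$; restricting the splitting of \eqref{Eqn: prop_iff_all_C} from $U$ to $\langle g\rangle$ gives a splitting of $E_g/Cl_{K}^{0}[i_U]\to\langle g\rangle$, so the main equivalence yields $\mu^m_{K/k}(C)>0$ for the class of $g$. Conversely, if every density is positive, then for any maximal cyclic $U$ with generator $h$ the main equivalence supplies an $i_U\mid m$ for which $\pi^{-1}(U)/Cl_{K}^{0}[i_U]\to U$ splits. The main obstacle I anticipate is the bookkeeping around $Cl_{K}^{0}[i]$: verifying the inclusion $Cl_{K}^{0}[i]\subseteq Cl_K[i]$ and handling the degenerate cases (notably $i=1$, where the quotient coincides with the original Hilbert subexact sequence \eqref{Eqn: sub_seq_sect_1} and the argument specializes to the principal-order-one statement of Theorem~\ref{Thm: main_thm_positive_density}).
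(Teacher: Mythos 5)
Your proof is correct and follows essentially the same route as the paper's: reduce to nonemptiness of the set $C_m$ via Proposition~\ref{Prop: density_well_defn}, then translate between lifts $\sigma$ of $g\in C$ satisfying $\sigma^{d_G(C)m}=\id_E$ and sections of the quotient sequences, using the inclusion $Cl_K^0[i]\subseteq Cl_K[i]$ (which the paper uses implicitly and you justify explicitly). The only point you elide that the paper spells out is the conjugation step in the converse of the ``consequence'': the main equivalence only yields a splitting of the sequence for $E_{g_0}$ for \emph{some} $g_0$ in the conjugacy class of the chosen generator of $U$, and one must conjugate by a lift $\tau\in\pi^{-1}(h)$ of an element $h$ with $g=h^{-1}g_0h$ to transport that splitting to $\pi^{-1}(U)$ itself.
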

\begin{pf}

Assume that $\mu^m_{K/k}(C)>0$. Then $C_m$ in \eqref{Eqn: defn_sets_C_m} is nonempty. There is an element $\sigma$ such that $\pi(\sigma)\in C$ and $d_{E}(\sigma)=d_{G}(C)i$ for a positive divisor $i$ of $m$. So the group $Cl_{K}^{0}[i]$ exists and $\pi(\sigma)\mapsto \sigma Cl_{K}^{0}[i]$ defines a splitting of the short exact sequence 
\[
1\to Cl_K/Cl^{0}_{K}[n]\to E_{g}/Cl^{0}_{K}[n]\to \langle \pi(\sigma)\rangle\to 1.\]

For the other direction, assume that there is a split short exact sequence 
\[
1\to Cl_K/Cl^{0}_{K}[i]\to E_{g}/Cl^{0}_{K}[i]\to \langle g\rangle\to 1
\]
for some $g\in C$ and some divisor $i$ of $m$. Let $\sigma Cl_{K}^{0}[i]$ be the image of $g$ under the splitting map. Then $\sigma Cl_{K}^{0}[i]$ and $g$ have the same order. We have $\frac{d_{E}(\sigma)}{d_{G}(C)}\mid m$. So $\sigma$ is an element in $C_m$ and $\mu^{m}_{K/k}(C)>0$.

To see the second part, suppose that $\mu^m_{K/k}(C)>0$ for every $C$. Let $U$ be a maximal cyclic group with a generator $g$. Let $C_{0}$ be the conjugacy class containing $g$. Since $\mu_{K/k}^{m}(C_0)>0$, using the above arguments one can find an element $g_0\in C_0$ such that the associated short exact sequence $ 1\to Cl_K/Cl^{0}_{K}[i]\to E_{g_0}/Cl_{K}^{0}[i]\to \langle g_0\rangle\to 1$ splits. Since $g_0$ and $g$ are in the same conjugacy class $C_0$, there exists an element $h\in G$ such that $g=h^{-1}g_0 h$. Take $\tau\in \pi^{-1}(h)$ to be an arbitrary lift of $h$ in $E$, it is not hard to check that \eqref{Eqn: prop_iff_all_C} can be recovered by conjugating $ 1\to Cl_K/Cl^{0}_{K}[i]\to E_{g_0}/Cl_{K}^{0}[i]\to \langle g_0\rangle\to 1$ by $\tau$. This proves the splitting of \eqref{Eqn: prop_iff_all_C}. Conversely, assume that \eqref{Eqn: prop_iff_all_C} splits for all maximal cyclic subgroups $U$ of $G$. One can deduce the splitting of \eqref{Eqn: prop_iff_all_C} for every (not necessarily maximal) cyclic group $U$. Then for each conjugacy class $C$, take $U$ to be any cyclic subgroup of $G$ which intersects $C$ nontrivially,  and take $g\in U\cap C$. Then the splitting of $1\to Cl_K/Cl^{0}_{K}[i_U]\to E_{g}/Cl^{0}_{K}[i_U]\to \langle g\rangle\to 1$ is deduced from the splitting of \eqref{Eqn: prop_iff_all_C}. So $C_m\neq \emptyset$, hence the proof is done.
\end{pf}

\begin{Cor}\label{Cor: sufficient_cond_mu_1,K>0}
If the Hilbert exact sequence 
$$
1\to Cl_K\to E\overset{\pi}{\to}G\to 1
$$
splits, then $\mu^1_{K/k}(C)>0$ for every conjugacy class $C$. 
\end{Cor}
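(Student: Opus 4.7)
The plan is to deduce this directly from Theorem~\ref{Thm: iff_cond_mu_1,K>0}. Taking $m=1$ in that theorem, the only positive divisor of $m$ is $i=1$, and $Cl_K^0[1]$ is the trivial subgroup of $Cl_K$. Consequently the statement of Theorem~\ref{Thm: iff_cond_mu_1,K>0} reduces, for $m=1$, to the assertion that $\mu_{K/k}^1(C)>0$ if and only if the subexact sequence
\begin{equation*}
    1\to Cl_K\to E_g\to \langle g\rangle\to 1
\end{equation*}
splits for some $g\in C$. So it suffices, for each conjugacy class $C$ of $G$, to produce one $g\in C$ for which this sequence admits a section.

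First, I would fix an arbitrary splitting $s:G\to E$ of the full Hilbert exact sequence, which exists by hypothesis. For any $g\in G$, restricting $s$ to $\langle g\rangle\subset G$ yields a group homomorphism $s|_{\langle g\rangle}:\langle g\rangle\to E$. Because $\pi\circ s=\id_G$, the image $s(\langle g\rangle)$ lands inside $\pi^{-1}(\langle g\rangle)=E_g$, and $s|_{\langle g\rangle}$ is a section of the restricted projection $E_g\to \langle g\rangle$. This shows that the subsequence $1\to Cl_K\to E_g\to \langle g\rangle\to 1$ splits for \emph{every} $g\in G$, not merely for one representative in each conjugacy class.

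Finally, fixing any conjugacy class $C$ and picking any $g\in C$, the preceding paragraph furnishes the splitting required by Theorem~\ref{Thm: iff_cond_mu_1,K>0}, giving $\mu_{K/k}^1(C)>0$. There is no real obstacle here; the argument is a one-line consequence of Theorem~\ref{Thm: iff_cond_mu_1,K>0} once one observes that a splitting of a short exact sequence of groups restricts to a splitting of the pullback over any subgroup of the quotient.
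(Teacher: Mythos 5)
Your proof is correct and follows essentially the same route as the paper: both arguments observe that a section of the full Hilbert exact sequence restricts to a section of $1\to Cl_K\to \pi^{-1}(H)\to H\to 1$ for any subgroup $H\subset G$, and then invoke Theorem~\ref{Thm: iff_cond_mu_1,K>0} (you apply its first part directly to each $\langle g\rangle$ with $i=1$ and $Cl_K^0[1]$ trivial, while the paper cites its second part via maximal cyclic subgroups --- an immaterial difference).
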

\begin{pf}
Note that if the Hilbert exact sequence splits, then for every subgroup $H\subset G$, the corresponding subsequence 
$$
1\to Cl_K\to \pi^{-1}(H)\to H\to 1
$$
also splits. In particular, let  $H$ run over all maximal cyclic subgroups $U$ of $G$, then this corollary follows as an immediate consequence of  Corollary~\ref{Thm: iff_cond_mu_1,K>0}. 
\end{pf}

It is worth mentioning that Wyman \cite{Wyman-Hilbert-cls-fld-1973}, Gold \cite{Gold-Hilbert-cls-fld-1977}, Cornell and Rosen \cite{Cornell-Rosen-splitting-Hilbert-cls-fld-1988} proved that the assumption of Corollary~\ref{Cor: sufficient_cond_mu_1,K>0} is true when $k$ has class number one and $K/k$ is cyclic. More precisely, then proved the following result. 

\begin{Prop}\cite[Theorem~1 and 2]{Gold-Hilbert-cls-fld-1977}\label{Cor: Gold_splitting_Hilbert}
The short exact sequence 
$$
1\to \Gal(H_K/K)\to \Gal(H_K/k)\to \Gal(K/k)\to 1
$$
splits if either of the following two conditions is fulfilled. 
\begin{enumerate}
    \item Let $r$ be the least common multiple of the ramification indices of all primes in $K/k$, then $r=[K:k]$.
    \item If $K/k$ is cyclic and $H_k\cap K=k$.
\end{enumerate}
In particular, when there is a totally ramified prime in $K/k$; or when $K/k$ is cyclic and $k$ has class number one, the above sequence splits. 
\end{Prop}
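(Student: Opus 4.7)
The statement is cited from Gold, so the plan is to outline the principal ideas behind each condition, with emphasis on the two ``in particular'' cases, since those are the ones actually invoked elsewhere in the paper.

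For condition (1), I would exhibit the section of $\pi$ via an inertia subgroup. Because $H_K/K$ is unramified, for every prime $\mathfrak{p}$ of $k$ and every prime $\mathfrak{Q}$ of $H_K$ above $\mathfrak{p}$, the ramification index of $\mathfrak{Q}$ in $H_K/k$ equals the ramification index $e_\mathfrak{p}$ of $\mathfrak{p}$ in $K/k$. Consequently the inertia subgroup $I_\mathfrak{Q}\subset\Gal(H_K/k)$ has order $e_\mathfrak{p}$, the same as its image $I_\mathfrak{p}\subset\Gal(K/k)$ under $\pi$, and $\pi$ restricts to an isomorphism $I_\mathfrak{Q}\to I_\mathfrak{p}$. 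If some prime of $k$ is totally ramified in $K$, then $I_\mathfrak{p}=\Gal(K/k)$, and the inverse of this restriction is the desired global splitting. The general assertion $r=[K:k]$ demands a more delicate assembly of local sections compatible over the decomposition of $\Gal(K/k)$; I would invoke \cite{Gold-Hilbert-cls-fld-1977} for that step, whose core mechanism is still the ramification identity above.

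For condition (2), the plan is to exploit the compositum $L:=K\cdot H_k$. Since $H_k/k$ is unramified abelian, base change yields that $L/K$ is unramified abelian, whence $L\subseteq H_K$. The linear disjointness hypothesis $K\cap H_k=k$ provides a canonical product decomposition
\[
\Gal(L/k)\;\cong\;\Gal(H_k/k)\times\Gal(K/k),
\]
and the second factor gives a section $s_L\colon\Gal(K/k)\to\Gal(L/k)$. Writing $\Gal(K/k)=\langle\sigma\rangle$ with $|\sigma|=n$ and $\tilde\sigma=s_L(\sigma)$, the remaining task is to lift $\tilde\sigma$ to some $\hat\sigma\in\Gal(H_K/k)$ of order exactly $n$. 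For any lift, $\hat\sigma^n$ lies in the abelian group $\Gal(H_K/L)\subset Cl_K$, and the class of $\hat\sigma^n$ in the Tate cohomology
\[
\hat H^0\bigl(\langle\sigma\rangle,\Gal(H_K/L)\bigr)\;\cong\;H^2\bigl(\langle\sigma\rangle,\Gal(H_K/L)\bigr)
\]
is independent of the chosen lift and is precisely the obstruction to finding $\hat\sigma$ of order $n$. A class field theory computation identifies $\Gal(H_K/L)$ with the kernel of the norm map $Cl_K\to\Gal(H_k/k)$, and shows that under $K\cap H_k=k$ this Tate class vanishes; the desired $\hat\sigma$ then exists and splits the Hilbert exact sequence.

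The two ``in particular'' assertions follow at once: a totally ramified prime in $K/k$ puts us in situation (1), while $h_k=1$ forces $H_k=k$, so that $K\cap H_k=k$ holds automatically and (2) applies. The main obstacle in this plan is the cohomological vanishing at the end of (2); carrying it out requires a careful comparison of the $\sigma$-action on $Cl_K$ with the image of the class group of $k$, which is the technical heart of Gold's original argument and the reason we defer to \cite{Gold-Hilbert-cls-fld-1977} for its details.
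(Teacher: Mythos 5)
The paper gives no proof of this proposition at all --- it is stated purely as a citation of Gold's Theorems 1 and 2 --- so there is nothing in the source to diverge from. Your outline is a correct and faithful sketch of the standard arguments: the inertia-group section for condition (1) (using that $H_K/K$ unramified forces $I_{\mathfrak{Q}}\to I_{\mathfrak{p}}$ to be an isomorphism, which immediately handles the totally ramified case), the reduction of condition (2) to the vanishing of an obstruction class in $\hat H^0(\langle\sigma\rangle,\Gal(H_K/L))$ after first splitting over $L=K\cdot H_k$, and the two ``in particular'' specializations. You correctly identify and defer the only genuinely hard steps (the general $r=[K:k]$ case and the cohomological vanishing) to \cite{Gold-Hilbert-cls-fld-1977}, which is exactly what the paper does.
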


\begin{Rmk}
To the best knowledge of the authors, a sufficient and necessary condition for \eqref{Eqn: prop_iff_all_C} to split for all $U$ is still unknown. Therefore, it is of interest to determine if there is an example of a Hilbert exact sequence which is nonsplit, but where every subsequence of the form \eqref{Eqn: prop_iff_all_C} is split. 
\end{Rmk}

\subsection{Formula for the densities.}\label{Sect: the_formula}
In this subsection, we will try to deduce a formula to compute the density $\mu_{K/k}^m(C)$ for a fixed conjugacy class $C$ of $G$. Until the end of this section, we will always assume
$
    \mu_{K/k}^m(C)>0.
$

Fix an element $\sigma\in E$ such that $\pi(\sigma)\in C$ and $\sigma$ has order dividing $d_{G}(C)m$. Hence $\sigma$ is an element in the set $C_m$ defined at \eqref{Eqn: defn_sets_C_m}. Consider the following group homomorphism (one can check it is well defined), 
\begin{equation}
\label{Eqn: the_norm_map}
    \begin{aligned}
    {\rm N}_{\sigma, m}: Cl_K &\to Cl_K\\
    x & \mapsto (x\sigma)^{d_{G}(C)m}\\
     & =x\sigma \cdot x\sigma \cdots x\sigma \quad (d_{G}(C)m\text{ copies})\\
     &= x \cdot \sigma x \sigma^{-1} \cdot \sigma^2 x \sigma^{-2}\cdots\sigma^{d_{G}(C)m-1} x \sigma^{-({d_{G}(C)m-1)}}.
\end{aligned}
\end{equation}
Then $\ker({\rm N}_{\sigma, m})=\{x\in Cl_K| (x\sigma)^{d_{G}(C)m}=\id_E\}$ and 
$\ker({\rm N}_{\sigma, m})\cdot \sigma=\{\tau\in E| \tau \in C_m \text{ and }\pi(\tau)=\pi(\sigma)\}.$

\begin{Prop}\label{Prop: density_kernel_relation}
Assume that $\mu_{K/k}^{m}(C)>0$.
For any element $\sigma\in \pi^{-1}(C)$, we have
$$
    \mu_{K/k}^m(C)=\frac{|C|}{|G|}\frac{|\ker({\rm N}_{\sigma, m})|}{h_K}.
$$
\end{Prop}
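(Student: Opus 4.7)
The plan is to start from the formula
\[
\mu^m_{K/k}(C)=\frac{|C_m|}{|E|}
\]
of Proposition~\ref{Prop: density_well_defn}, where $C_m=\{\tau\in E\mid\pi(\tau)\in C,\ \tau^{d_G(C)m}=\id_E\}$, and use the order equality $|E|=h_K\cdot|G|$. The strategy is to partition $C_m$ according to the fiber of $\pi$, count each fiber in terms of the kernel of the norm map ${\rm N}_{\sigma,m}$, and then observe that the resulting count is the same for every choice of representative.

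First I would fix $g\in C$ and a lift $\sigma_{g}\in\pi^{-1}(g)\subset E$. Every other element of $\pi^{-1}(g)$ has the form $x\sigma_{g}$ for a unique $x\in Cl_K$, so by the computation immediately preceding the statement,
\[
\{\tau\in C_m \mid \pi(\tau)=g\}=\ker({\rm N}_{\sigma_{g},m})\cdot\sigma_{g},
\]
a set of cardinality $|\ker({\rm N}_{\sigma_{g},m})|$.

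The main step is to show that $|\ker({\rm N}_{\sigma,m})|$ is independent of the chosen $\sigma\in\pi^{-1}(C)$. I would handle this in two moves. For changing the lift within a single fiber, if $\sigma'=y\sigma$ with $y\in Cl_K$, then $(xy\sigma)^{d_G(C)m}=\id_E$ iff $xy\in\ker({\rm N}_{\sigma,m})$, so right-translation by $y$ gives a bijection $\ker({\rm N}_{y\sigma,m})\xrightarrow{\sim}\ker({\rm N}_{\sigma,m})$. For passing between two elements of the conjugacy class $C$, I would use that $Cl_K$ is normal in $E$: if $\sigma'=\tau\sigma\tau^{-1}$ for $\tau\in E$, a direct manipulation yields
\[
{\rm N}_{\sigma',m}(x)=\tau\,{\rm N}_{\sigma,m}(\tau^{-1}x\tau)\,\tau^{-1},
\]
so conjugation by $\tau$ restricts to a bijection $\ker({\rm N}_{\sigma',m})\xrightarrow{\sim}\ker({\rm N}_{\sigma,m})$. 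Combined, these show the cardinality is a well-defined function of $C$. This invariance is the only non-routine step; checking that conjugation really lands in $Cl_K$ (needed to make sense of ${\rm N}_{\sigma,m}(\tau^{-1}x\tau)$) will be where I need to invoke normality most carefully.

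Finally I would assemble the count:
\[
|C_m|=\sum_{g\in C}|\{\tau\in C_m\mid\pi(\tau)=g\}|=|C|\cdot|\ker({\rm N}_{\sigma,m})|,
\]
and dividing by $|E|=h_K|G|$ yields the claimed formula
\[
\mu_{K/k}^m(C)=\frac{|C|}{|G|}\cdot\frac{|\ker({\rm N}_{\sigma,m})|}{h_K}.
\]
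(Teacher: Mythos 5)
Your proof is correct and follows essentially the same route as the paper's: partition $C_m$ into the fibers of $\pi$ over elements of $C$, identify each fiber with a coset $\ker({\rm N}_{\sigma_g,m})\sigma_g$, and use conjugation in $E$ to see all these kernels have the same size before dividing by $|E|=h_K|G|$. Your additional right-translation argument showing $|\ker({\rm N}_{y\sigma,m})|=|\ker({\rm N}_{\sigma,m})|$ for $y\in Cl_K$ is a welcome extra detail, since it justifies the ``for any $\sigma\in\pi^{-1}(C)$'' clause for lifts that are not conjugate to the chosen one, a point the paper's proof passes over.
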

\begin{proof}
Note that $\ker({\rm N}_{\sigma, m})=\tau \ker({\rm N}_{\tau\sigma\tau^{-1}, m})\tau^{-1}$ for any element $\tau\in E$. We have $|\ker({\rm N}_{\sigma, m})|=|\ker({\rm N}_{\tau \sigma\tau^{-1}, m})|$. Moreover, if $\pi(\sigma)\neq \pi(\tau \sigma \tau^{-1})$ then $ \ker({\rm N}_{\sigma, m}) \sigma $ and $ \ker({\rm N}_{\tau \sigma \tau^{-1}, m})(\tau \sigma \tau^{-1})$ are disjoint. By this observation and \eqref{Eqn: defn_sets_C_m}, if for every $g\in C\subset G$, we choose a lift $\sigma_{g}\in \pi^{-1}(g)$ such that $\sigma_g$ is conjugate with the given $\sigma$ in $E$, then 
$$
C_m=\bigsqcup_{g\in C} \ker({\rm N}_{\sigma_g, m})\sigma_g.
$$
So $|C_m|=|C||\ker({\rm N}_{\sigma, m})|$ for $\sigma \in \pi^{-1}(C)$. Combining this with \eqref{Eqn: prin_density_Hilbert_form} proves the result.
\end{proof}

By this proposition, the study of the density $\mu_{K/k}^m(C)$ is reduced to studying the subgroup $\ker({\rm N}_{\sigma, m})$. First we look at the special case $m=1$. Let $g=\pi(\sigma)\in G$, then $\langle g\rangle$ acts on $K$. We denote by $F\coloneqq K^{\langle g\rangle}$ the fixed field of $K$ by $\langle g\rangle$, then by genus theory \cite{Furuta-genus-fld-1967}, there exists an intermediate field $H_K\supset K_F\supset K$ which is maximal among all such possible intermediate fields whose Galois group over $F$ is abelian. This $K_F$ is called the \emph{genus field of $K$ over $F$} . We call the degree $[K_F: K]$ the \emph{genus number} of $K$ over $F$. Now we are equipped to state the following result. 

\begin{Thm}\label{Thm: genus_fld_homology_density}
For every conjugacy class $C$ of $G$ such that $\mu_{K/k}^1(C)>0$,  let $\sigma\in \pi^{-1}( g)\subset E_g$ be an element that has order $d_{G}(C)$ for some $g\in C$,  take $F$ to be the subfield of $K$ fixed by $g$ and take $K_F$ to be the genus field of $K$ over $F$, then 
$$
\frac{|\ker({\rm N}_{\sigma, 1})|}{h_K}=\frac{|{H}^1(\langle g \rangle, Cl_K)|}{[K_F:K]}.
$$
As a consequence, we have 
$$
\mu_{K/k}^1(C)=\frac{|C|}{|G|}\frac{|H^1(\langle g \rangle, Cl_K)|}{[K_F:K]}.
$$
\end{Thm}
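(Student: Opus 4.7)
The plan is to reinterpret $N_{\sigma,1}$ as the Tate norm for the cyclic group $\langle g\rangle$ acting on $Cl_K$, and then separately translate the residual factor into $[K_F:K]$ via class field theory. Setting $n = d_G(C)$, I would first observe that conjugation by $\sigma$ induces an automorphism of the abelian normal subgroup $Cl_K \trianglelefteq E$, and because $Cl_K$ is abelian this automorphism factors through $\pi(\sigma) = g$, hence coincides with the natural $\langle g\rangle$-action on the class group. Since $\sigma$ has order exactly $n$ by hypothesis, expanding in $E$ gives
\[
(x\sigma)^{n} \;=\; x \cdot ({}^{g}x) \cdot ({}^{g^{2}}x) \cdots ({}^{g^{n-1}}x) \cdot \sigma^{n} \;=\; N_{\langle g\rangle}(x),
\]
so $N_{\sigma,1}$ coincides with the norm endomorphism $N_{\langle g\rangle}\colon Cl_K \to Cl_K$ of cyclic-group cohomology, and in particular $\ker(N_{\sigma,1}) = \ker(N_{\langle g\rangle})$.

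Next, I would invoke the periodicity of Tate cohomology for the cyclic group $\langle g\rangle$,
\[
H^{1}(\langle g\rangle, Cl_K) \;=\; \widehat{H}^{-1}(\langle g\rangle, Cl_K) \;=\; \ker(N_{\langle g\rangle})\big/(g-1)Cl_K,
\]
which yields the index formula $|\ker(N_{\sigma,1})| = |H^{1}(\langle g\rangle, Cl_K)| \cdot |(g-1)Cl_K|$. Dividing through by $h_K = |Cl_K|$, the desired identity reduces to the single claim $[K_F:K] = |Cl_K/(g-1)Cl_K|$, after which the formula for $\mu^{1}_{K/k}(C)$ is immediate from Proposition~\ref{Prop: density_kernel_relation}.

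The remaining input, which I expect to be the main obstacle in packaging the proof cleanly, is the genus-theoretic identification of $[K_F:K]$ via class field theory. The extension $H_K/F$ fits into the short exact sequence $1 \to Cl_K \to \Gal(H_K/F) \to \langle g\rangle \to 1$, and by definition $K_F$ is the maximal intermediate field $H_K \supseteq K_F \supseteq K$ with $K_F/F$ abelian, so $\Gal(H_K/K_F)$ equals the commutator subgroup $[\Gal(H_K/F), \Gal(H_K/F)]$. Since the quotient $\langle g\rangle$ is abelian, all such commutators lie in $Cl_K$; and for $x \in Cl_K$ and any lift $\tilde\tau \in \Gal(H_K/F)$ of $g^{i} \in \langle g\rangle$, a direct computation gives $[\tilde\tau, x] = ({}^{g^{i}}x)\cdot x^{-1}$. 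Hence the commutator subgroup is precisely the augmentation submodule $(g-1)Cl_K$, and under the Artin isomorphism $Cl_K \cong \Gal(H_K/K)$ this identifies $\Gal(K_F/K)$ with $Cl_K/(g-1)Cl_K$, giving $[K_F:K] = |Cl_K/(g-1)Cl_K|$. Combined with the two displays above, this yields the stated equality, and the final formula for $\mu^{1}_{K/k}(C)$ follows.
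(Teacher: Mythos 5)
Your proposal is correct and follows essentially the same route as the paper: identify ${\rm N}_{\sigma,1}$ with the cyclic-group norm, use periodicity of Tate cohomology to get $H^1(\langle g\rangle, Cl_K)\cong \ker({\rm N}_{\sigma,1})/(g-1)Cl_K$, and identify $(g-1)Cl_K$ with the commutator subgroup of $\Gal(H_K/F)=E_g$, hence with $\Gal(H_K/K_F)$ by genus theory and the Artin map. The only cosmetic difference is that the paper runs the final count through $|E_g|=h_K\,d_G(C)$ rather than dividing by $h_K$ directly.
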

\begin{pf}
First the element $\sigma$ exists by Theorem~\ref{Thm: iff_cond_mu_1,K>0}. Note that $E_{g}$ acts on $Cl_K$ naturally by conjugation, and this action factors through $\langle \sigma \rangle\simeq \langle g\rangle $. By the theory of the Tate cohomology/homology \cite[Chap.~\RNum{8}, $\S$1, $\S$4]{Serre-LF}, it follows that
\begin{equation}\label{Eqn: Tate_homology_H0}
    H^1(\langle g \rangle, Cl_K)\simeq H^1(\langle \sigma \rangle, Cl_K)=\hat{H}^1(\langle \sigma \rangle, Cl_K)=\hat{H}_0(\langle \sigma \rangle, Cl_K)=\frac{\ker({\rm N}_{\sigma, 1})}{D(Cl_K)},
\end{equation}
where $D(Cl_K)$ is the subgroup of $\ker({\rm N}_{\sigma, 1})$ generated by $xsx^{-1}s^{-1}$ as  $x$ and $s$ run over $Cl_K$ and $\langle \sigma \rangle$ respectively. Easy calculation shows that $D(Cl_K)$ is the commutator subgroup $[E_g, E_g]$ of $E_g$. Hence using Galois theory, one sees that $D(Cl_K)$ corresponds to the genus field $K_F$. Hence $$[K_F:K]\cdot d_{G}(C)=|\Gal(K_F/F)|=|E_{g}|/|D(Cl_K)|.$$
Applying \eqref{Eqn: Tate_homology_H0}, we get
$$
    |H^1(\langle g \rangle, Cl_K)|= \frac{|\ker({\rm N}_{g, 1})|}{|D(Cl_K)|}=\frac{|\ker({\rm N}_{\sigma, 1})|[K_F:K]d_{G}(C)}{|E_{g}|}.
$$
Since $|E_{g}|=h_K d_{G}(C)$, we thus get 
$$
|H^1(\langle g \rangle, Cl_K)|=\frac{|\ker({\rm N}_{\sigma, 1})|[K_F:K]}{h_K}=[K_F:K]\frac{|\ker({\rm N}_{\sigma, 1})|}{h_K}.
$$
Then the last equation follows immediately from Proposition~\ref{Prop: density_kernel_relation}.
\end{pf}

\begin{comment}
In the special case that $F$ has class number one, for every $x\in Cl_K$, we choose a representative $\bp$ as a prime ideal of $K$, then it is not hard to check that 
$
{\rm N}_{\sigma, 1}(x)
$
can be lifted (into the ideal group $I_K$) as $\prod_{i=1}^{d_{G}(C)} \sigma^{i}(\bp)$. Assume ${\p}=(a)$ (since $\OO_F$ is a principal ideal domain) to be a prime ideal of $F$ lying below of $\bp$, then $(a)\OO_K={\p}\OO_K=\prod_{i=1}^{f_{\sigma}} \sigma^{i}(\bp)$ is also principal. This implies that $x\in \ker({\rm N}_{\sigma, 1})$. Since $x$ was chosen arbitrarily, we know $\ker({\rm N}_{\sigma, 1})=Cl_K$. Applying Theorem~\ref{Thm: genus_fld_homology_density}, we obtain the next corollary. 
\begin{Cor}\label{Cor: F_has_cls_num_1}
When $F$ has class number one, then $H^1(\langle \sigma \rangle, Cl_K)$ has cardinality of the genus degree, i.e., 
$$
|H^1(\langle \sigma \rangle, Cl_K)|=[K_F:K]. 
$$
\end{Cor}
\end{comment}

Now we consider the case $m>1$ (still assuming $\mu^1_{K/k}(C)>0$). By the definition of ${\rm N}_{\sigma, m}$ \eqref{Eqn: the_norm_map} and the fact there is an element $\sigma$ such that $\sigma^{d_{G}(C)}=\id_E$, we know ${\rm N}_{\sigma, m}=({\rm N}_{\sigma, 1})^m: x\mapsto ({\rm N}_{\sigma, 1}(x))^m$. Hence one sees that 
$$
\ker({\rm N}_{\sigma, m})/\ker({\rm N}_{\sigma, 1})=(Cl_K/\ker({\rm N}_{\sigma, 1}))[m]
$$
where for a finite abelian group $\mathcal{G}$,  $\mathcal{G}[m]$ denotes the subgroup of $m$-torsion elements of $\mathcal{G}$. Combining this with Proposition~\ref{Prop: density_kernel_relation} and Theorem~\ref{Thm: genus_fld_homology_density}, we deduce the following corollary. 

\begin{Cor}\label{Cor: density_formula_general_C}
With all the notations above, if $\mu_{K/k}^{1}(C)>0$, we have 
$$
\mu_{K/k}^m(C)=\frac{|C|}{|G|}\frac{|H^1(\langle \sigma \rangle, Cl_K)|}{[K_F:K]} |(Cl_K/\ker({\rm N}_{\sigma, 1}))[m]|.
$$
\end{Cor}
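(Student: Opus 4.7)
The plan is to reduce the formula to Proposition~\ref{Prop: density_kernel_relation} and Theorem~\ref{Thm: genus_fld_homology_density} by controlling $\ker({\rm N}_{\sigma,m})$ in terms of $\ker({\rm N}_{\sigma,1})$. Since $\mu_{K/k}^{1}(C)>0$, the forward direction of Theorem~\ref{Thm: iff_cond_mu_1,K>0} (applied with $m=1$) lets me fix a lift $\sigma\in \pi^{-1}(g)$ for some $g\in C$ with $\sigma^{d_{G}(C)}=\id_{E}$, and I use this specific $\sigma$ throughout. By Lemma~\ref{Prop: basic_properties_density}, the hypothesis also forces $\mu_{K/k}^m(C)>0$, so Proposition~\ref{Prop: density_kernel_relation} applies and yields $\mu_{K/k}^m(C)=\frac{|C|}{|G|}\cdot\frac{|\ker({\rm N}_{\sigma,m})|}{h_K}$.

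The key computation is the identity ${\rm N}_{\sigma,m}(x)=({\rm N}_{\sigma,1}(x))^{m}$ for every $x\in Cl_K$, asserted in the paragraph preceding the corollary. I would verify it by first expanding
$$
(x\sigma)^{d_{G}(C)} = \Bigl(\prod_{i=0}^{d_{G}(C)-1}\sigma^{i}x\sigma^{-i}\Bigr)\cdot\sigma^{d_{G}(C)} = {\rm N}_{\sigma,1}(x),
$$
using $\sigma^{d_{G}(C)}=\id_{E}$, and then raising to the $m$-th power: ${\rm N}_{\sigma,m}(x)=(x\sigma)^{d_{G}(C)m}=\bigl((x\sigma)^{d_{G}(C)}\bigr)^{m}=({\rm N}_{\sigma,1}(x))^m$. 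Writing $\phi={\rm N}_{\sigma,1}: Cl_K\to Cl_K$, this identifies $\ker({\rm N}_{\sigma,m})=\phi^{-1}(Cl_K[m])$, and restricting $\phi$ produces a short exact sequence
$$
1\to \ker(\phi)\to \ker({\rm N}_{\sigma,m})\to \phi(Cl_K)\cap Cl_K[m]\to 1.
$$
Since $\phi(Cl_K)\simeq Cl_K/\ker(\phi)$, the right-hand term has order $|(Cl_K/\ker({\rm N}_{\sigma,1}))[m]|$, giving the multiplicative relation $|\ker({\rm N}_{\sigma,m})|=|\ker({\rm N}_{\sigma,1})|\cdot|(Cl_K/\ker({\rm N}_{\sigma,1}))[m]|$.

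To assemble everything, I substitute this factorization into Proposition~\ref{Prop: density_kernel_relation} and apply Theorem~\ref{Thm: genus_fld_homology_density} to replace $|\ker({\rm N}_{\sigma,1})|/h_K$ by $|H^1(\langle \sigma\rangle, Cl_K)|/[K_F:K]$ (using $H^1(\langle g\rangle, Cl_K)\simeq H^1(\langle \sigma\rangle, Cl_K)$ from \eqref{Eqn: Tate_homology_H0}); this yields the claimed formula. The one subtle point is the first step: for an arbitrary lift $\sigma\in C_m$ one only has $\sigma^{d_{G}(C)m}=\id_{E}$, and then the identity ${\rm N}_{\sigma,m}=({\rm N}_{\sigma,1})^{m}$ would fail. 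Hence the hypothesis $\mu_{K/k}^{1}(C)>0$, which is strictly stronger than $\mu_{K/k}^{m}(C)>0$, is essential precisely because it furnishes a lift of order dividing $d_{G}(C)$.
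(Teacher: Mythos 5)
Your argument is correct and follows the paper's own route: fix a lift $\sigma$ of order exactly $d_G(C)$ (available since $\mu^1_{K/k}(C)>0$), observe ${\rm N}_{\sigma,m}=({\rm N}_{\sigma,1})^m$, deduce $|\ker({\rm N}_{\sigma,m})|=|\ker({\rm N}_{\sigma,1})|\cdot|(Cl_K/\ker({\rm N}_{\sigma,1}))[m]|$, and combine with Proposition~\ref{Prop: density_kernel_relation} and Theorem~\ref{Thm: genus_fld_homology_density}. The only difference is cosmetic (you count the index via the image $\phi(Cl_K)\cap Cl_K[m]$ rather than via the quotient $\ker({\rm N}_{\sigma,m})/\ker({\rm N}_{\sigma,1})$), and your closing remark correctly identifies why the order of $\sigma$ must divide $d_G(C)$ rather than $d_G(C)m$.
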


The case $C=\{\id_G\}$ is of special interest, so we state it as a theorem. 
\begin{Cor}\label{Thm: class_group_density}
Taking $C=\{\id_G\}$ to be the trivial conjugacy class in $G$, for every positive integer $m$, we have 
$$
Cl_K[m]=\ker({\rm N}_{\id_E, m}).
$$
In particular, for every prime integer $p$ and every positive integer $r$, we have 
$$
    \frac{\mu_{K/k}^{p^r}(\{\id_G\})}{\mu_{K/k}^{p^{r-1}}(\{\id_G\})} =\frac{|Cl_K[p^r]|}{|Cl_K[p^{r-1}]|}.
$$
In addition, $\mu_{K/k}^1(\id_G)=\frac{|C|}{|G|}\frac{1}{h_K}$.
\end{Cor}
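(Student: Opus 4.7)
The plan is to specialize Proposition~\ref{Prop: density_kernel_relation} to $C = \{\id_G\}$ using the canonical lift $\sigma = \id_E \in \pi^{-1}(\id_G)$. The first step is to compute the norm map \eqref{Eqn: the_norm_map} for this choice. Since $d_G(\{\id_G\}) = 1$ and conjugation by $\id_E$ is trivial, the telescoping product in \eqref{Eqn: the_norm_map} collapses to ${\rm N}_{\id_E, m}(x) = x \cdot x \cdots x = x^m$, whose kernel is by definition the $m$-torsion subgroup $Cl_K[m]$. This yields the first identity $Cl_K[m] = \ker({\rm N}_{\id_E, m})$.

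Next I would confirm that $\mu_{K/k}^m(\{\id_G\}) > 0$ so that Proposition~\ref{Prop: density_kernel_relation} actually applies. This is automatic: the element $\id_E$ itself lies in the set $C_m$ defined at \eqref{Eqn: defn_sets_C_m}, since $\pi(\id_E) = \id_G$ and $\id_E^{d_G(\{\id_G\}) m} = \id_E$. Alternatively, one can invoke Theorem~\ref{Thm: iff_cond_mu_1,K>0} with $g = \id_G$, where the associated subsequence is trivially split because $E_{\id_G} = Cl_K$.

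Substituting into Proposition~\ref{Prop: density_kernel_relation} with $|C| = 1$ then gives $\mu_{K/k}^m(\{\id_G\}) = \frac{1}{|G|} \cdot \frac{|Cl_K[m]|}{h_K}$. Setting $m = 1$ uses $|Cl_K[1]| = 1$ and recovers the closing formula $\mu_{K/k}^1(\id_G) = \frac{|C|}{|G|} \cdot \frac{1}{h_K}$. Taking the ratio of the general formula at $m = p^r$ and $m = p^{r-1}$ cancels the common factor $\frac{1}{|G| h_K}$ and leaves exactly $\frac{|Cl_K[p^r]|}{|Cl_K[p^{r-1}]|}$, as required.

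There is no substantive obstacle here; the corollary is the cleanest possible specialization of the machinery built up in Section~\ref{Sect: explicit_formula}. The only conceptual content is the observation that choosing $\sigma = \id_E$ makes the conjugation action of $\langle \sigma \rangle$ on $Cl_K$ trivial, which collapses the twisted norm ${\rm N}_{\sigma, m}$ to the ordinary $m$-th power map. In particular, no input from Theorem~\ref{Thm: genus_fld_homology_density} or Corollary~\ref{Cor: density_formula_general_C} is needed, since the Tate cohomology and genus field degree both become trivial in this specialization.
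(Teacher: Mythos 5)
Your proof is correct and follows essentially the same route as the paper's own argument: take the canonical lift $\sigma=\id_E$, observe that the twisted norm collapses to the $m$-th power map so that $\ker({\rm N}_{\id_E,m})=Cl_K[m]$, and then feed this into Proposition~\ref{Prop: density_kernel_relation} to obtain the formula $\mu_{K/k}^m(\{\id_G\})=\tfrac{1}{|G|}\tfrac{|Cl_K[m]|}{h_K}$ and the stated ratio. The paper additionally remarks that the corollary can also be read off from Theorem~\ref{Thm: genus_fld_homology_density} via $|H^1(\id_E,Cl_K)|=1$ and $K_F=H_K$, but its ``more explicit proof'' is exactly your argument.
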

\begin{pf}
This theorem can be deduced from the results by using the facts that $|H^1(\id_E, Cl_K)|=1$, $K_F=H_K$ and $\ker({\rm N}_{\id_E, 1})=\id_E$ in this case. Here we give a more explicit proof. When $C=\{\id_G\}$ is the trivial class, we can always take $\sigma=\id_E$, which has order (in $E$) one. Hence the we always have $\mu^m_{K/k}(\{\id_G\})>0$ for every positive integer $m$. Then it is not hard to see that ${\rm N}_{\id_E, m}(x)=x^m$. Hence $\ker({\rm N}_{\id_E, m})=\{x\in Cl_K| x^m=\id_E\}=Cl_K[m]$. In particular, $\frac{\mu_{K/k}^{p^r}(\{\id_G\})}{\mu_{K/k}^{p^{r-1}}(\{\id_G\})} =\frac{|Cl_K[p^r]|}{|Cl_K[p^{r-1}]|}$ follows by taking $m=p^{r-1}$ and $p^r$ respectively. The last equation follows from Proposition~\ref{Prop: density_kernel_relation}.
\end{pf}

\section{Effective method}\label{Sect: effective_version}
This section is devoted to prove Theorem~\ref{Thm: effective_test_nonsplit}. As we have seen in the previous sections, especially from Corollary~\ref{Cor: sufficient_cond_mu_1,K>0}, if the Hilbert exact sequence \eqref{Eqn: Hilbert_exact_seq} for one Galois extension $K/k$ splits, then $\mu^1_{K/k}(C)>0$ for every conjugacy class $C\subset G$. As one can see from the proof of Proposition~\ref{Prop: density_well_defn}, $\mu^1_{K/k}(C)$ is essentially dependent on the (union) of conjugacy classes of $E=\Gal(H_K/k)$. Thus if one can find a bound $B_K$ such that every conjugacy class of $E$ can be realized as the Frobenius conjugacy class of at least one prime ideal $\p$ of $k$ which is unramified in $H_K$, then every conjugacy class $C$ of $G=\Gal(K/k)$ will have a contribution from at least one such prime ideal. Hence our task is reduced to finding a bound to realize every conjugacy class of $E$. This effective version of the Chebotarev density theorem has been studied by many people, here we use the version of Bach and Sorenson.

Let $L/k$ be a Galois extension of number fields, with $L\neq\Q$. Let $\Delta$ denote the absolute value of $L$'s discriminant (assuming $\Delta\to \infty$). Let $n_L$ denote the degree of $L$ over $\Q$. Let $C\subset \Gal(L/k)$ be a conjugacy class of the Galois group of $L/k$. Assume Generalized Riemann Hypotheses (GRH). Then \cite[Theorem~3.1, 3.2, 5.1 and Corollary~3.3]{BS-Chebotarev} there is an unramified prime ideal $\mathfrak{p}$ of $k$ with $\left(\frac{L/k}{\mathfrak{p}}\right)=C$, of residue degree $1$, satisfying 
$$
N\mathfrak{p}\leq (4\log \Delta + 2.5n_L+5)^2.\footnote{A better bound for each concrete extension can be computed effectively. }
$$

With the above result of Bach and Sorenson, we are left finding/estimating the degree $[H_K:\Q]$ and discriminant $\Delta_{H_K}$. In the following, we will always assume that the extension $K/\Q$ is known in the sense that we know its absolute degree $n$ and absolute discriminant $\Delta_{K}$.

\begin{Thm}\label{Thm: effective_bound}
Let $K/k$ be a Galois extension. Assume $K$ has absolute degree $n$, absolute discriminant $\Delta_K$ and class number $h_K$. Assuming GRH, take 
\begin{equation}\label{bound p}
    B_K= (4h_K \log |\Delta_{K}|+ n\cdot h_K+5 )^2. 
\end{equation}
Then a conjugacy class $C\subset \Gal(K/k)$ satisfies $\mu^1_{K/k}(C)>0$ if and only if there exists an unramified prime ideal $\p$ of $k$ such that 
\begin{enumerate}
    \item $N\p \leq B_K$,
    \item $\p$ factors principally in $K$, 
    \item $\left(\frac{K/k}{\p}\right)=C$. 
\end{enumerate}
In particular, if the associated Hilbert exact sequence splits, then every conjugacy class $C$ can be realized as the Frobenius class by at least one prime ideal $\p$ as above. 
\end{Thm}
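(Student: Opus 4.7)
The plan is to reduce the theorem to the effective Chebotarev density bound of Bach--Sorenson applied to the larger Galois extension $H_K/k$, using the fact that principal factorization in $K$ is detected by the Frobenius class in $\Gal(H_K/k)$. The key observation is that for an unramified prime $\p$ of $k$ with Frobenius $\sigma$ in $H_K/k$, the Frobenius identities \eqref{Eqn: Frobs} give $\pi(\sigma) \in \left(\frac{K/k}{\p}\right)$ and any $\bp\mid\p$ is principal in $K$ if and only if $\sigma^{f_\p} = \id_E$, where $f_\p$ is the residue degree. If additionally $\left(\frac{K/k}{\p}\right) = C$, then $f_\p = d_G(C)$, and so conditions (2) and (3) together are equivalent to the Frobenius $\sigma$ lying in the set $C_1$ defined in \eqref{Eqn: defn_sets_C_m}.

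Combining this translation with Proposition~\ref{Prop: density_well_defn}, we see that $\mu^1_{K/k}(C) > 0$ is equivalent to $C_1 \neq \emptyset$. The ``if'' direction of the biconditional is then immediate: the existence of any unramified prime $\p$ (of any norm) satisfying (2) and (3) already produces an element of $C_1$ and hence forces $\mu^1_{K/k}(C) > 0$. The substance is in the converse: assuming $\mu^1_{K/k}(C) > 0$, there is a nonempty conjugacy class $C' \subset \Gal(H_K/k)$ contained in $C_1$, and the task is to realize $C'$ as the Frobenius of some prime with norm at most $B_K$.

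For this I would invoke the Bach--Sorenson bound stated before the theorem, applied to $L = H_K$ over $\Q$, which requires estimating $[H_K:\Q]$ and $|\Delta_{H_K}|$. The degree is immediate from the tower: $[H_K:\Q] = [H_K:K]\cdot[K:\Q] = h_K \cdot n$. For the discriminant, the crucial input is that $H_K/K$ is unramified by definition of the Hilbert class field, so the relative different $\mathfrak{d}_{H_K/K}$ is trivial; the conductor--discriminant tower formula then yields $|\Delta_{H_K}| = |\Delta_K|^{[H_K:K]} \cdot N_{K/\Q}(\mathfrak{d}_{H_K/K}) = |\Delta_K|^{h_K}$. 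Substituting these into the Bach--Sorenson bound produces an inequality of the shape displayed in \eqref{bound p}, and in particular an unramified prime $\p$ of $k$, of residue degree one over $\Q$, whose $H_K/k$-Frobenius is the prescribed class $C'$.

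The main obstacle I anticipate is bookkeeping rather than a structural difficulty: one must verify that the prime $\p$ produced by Bach--Sorenson for the extension $H_K/k$ automatically satisfies conditions (2) and (3) of the theorem. Property (3) follows from the restriction identity $\pi(C') = C$, while property (2) follows because any $\sigma \in C' \subseteq C_1$ satisfies $\sigma^{d_G(C)} = \id_E$, which under the Frobenius--factorization dictionary says each prime of $K$ above $\p$ is principal. A minor subtlety is the degenerate case $H_K = \Q$ (forcing $k = K = \Q$), excluded from the hypotheses of Bach--Sorenson but trivial on its own; matching the numerical constants in $B_K$ with the precise form of Bach--Sorenson's inequality is a straightforward comparison, absorbing the linear term using $h_K \geq 1$ and $n \geq 1$.
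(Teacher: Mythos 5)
Your proposal follows essentially the same route as the paper: reduce to realizing conjugacy classes of $\Gal(H_K/k)$, apply Bach--Sorenson to $L=H_K$, compute $[H_K:\Q]=nh_K$ and $|\Delta_{H_K}|=|\Delta_K|^{h_K}$ from the unramifiedness of $H_K/K$ (the paper does this via the different rather than the discriminant tower formula, but the content is identical), and translate conditions (2)--(3) into membership of the Frobenius in the set $C_1$ via Proposition~\ref{Prop: density_well_defn}. The only caveat is the one you already flag: a literal substitution into Bach--Sorenson yields the linear term $2.5\,n h_K$ rather than the $n h_K$ appearing in \eqref{bound p}, so the constants cannot simply be ``absorbed'' downward --- this discrepancy is present in the paper itself (the worked example uses $2.5$), and is not a defect of your argument.
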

\begin{Rmk}
Without the GRH assumption, there is still an effective bound $B_K$ to guarantee the result of the above theorem.  
\end{Rmk}

\begin{proof}
As explained in the beginning of this section, it is sufficient to find a bound to realize every conjugacy class of $\Gal(H_K/k)$. Thanks to the theorem of  Bach and Sorenson, we are left only to bound the absolute degree and the absolute discriminant of $H_K$. In fact, $[H_K:\Q]=h_K\cdot [K:\Q]$. Hence the only nontrivial part of this proof is to give an estimation of $|\Delta_{H_K/\Q}|$. 

For this purpose, we recall \cite[Chapter~\RNum{3}, $\S$~2]{Neukirch-Alg-NT-1999}  that there is a fractional ideal $\mathfrak{D}_{H_K/K}$ of $\OO_{H_K}$ such that a prime ideal $\widetilde{\mathfrak{P}}$ of $\OO_{H_K}$ is ramified over $K$ if and only if $\widetilde{\mathfrak{P}}|\mathfrak{D}_{H_K/K}$ \cite[Chapter~\RNum{3}, Theorem~2.6]{Neukirch-Alg-NT-1999}. Moreover, we have that \cite[Chapter~\RNum{3}, Theorem~2.9]{Neukirch-Alg-NT-1999} the discriminant $\Delta_{H_K/\Q}$ is the norm of the different $\mathfrak{D}_{H_K/\Q}$, i.e., 
$$
\Delta_{H_K/\Q}=N\mathfrak{D}_{H_K/\Q}
$$
and \cite[Chapter~\RNum{3}, Proposition~2.2]{Neukirch-Alg-NT-1999}
$$\mathfrak{D}_{H_K/\Q}=\mathfrak{D}_{H_K/K}\mathfrak{D}_{K/\Q}.$$ 
Note that since $H_K$ is unramified over $K$, we know that $\mathfrak{D}_{H_K/K}=(1)$ is the trivial ideal of $\OO_{H_K}$. Thus $\mathfrak{D}_{H_K/\Q}=\mathfrak{D}_{K/\Q}$ (considered as an ideal of $\OO_{H_K}$). Hence we know that in this case, 
$$
\Delta_{H_K}=N\mathfrak{D}_{H_K/\Q}=N\mathfrak{D}_{K/\Q}\OO_{H_K}=(N\mathfrak{D}_{K/\Q})^{h_K}=\Delta_K^{h_K}.
$$
Applying the theorem of Bach and Sorenson with $L=H_K$, we get an effective bound to cover all the conjugacy classes of $\Gal(H_K/k)$. That is 
\begin{equation*}
    B_K= (4h_K \log |\Delta_{K}|+ n\cdot h_K+5 )^2. 
\end{equation*}
Now we take 
$$S\coloneqq \{\p \mid N\p\leq B_K \text{ and }\p \text{ factors principally in }K\}.$$ 
Then by the same arguments as in the proof of Proposition~\ref{Prop: density_well_defn}, one can easily check that the Frobenius classes of prime ideals in $S$ realize every conjugacy class $C\subset \Gal(K/k)$ as long as $\mu^1_{K/k}(C)>0$. This completes the proof.
\end{proof}

\begin{Rmk}
However, sometimes the class number $h_K$ is not easy to compute (when $n\gg 0$). In this case, one can use the following estimation \cite[Theorem~6.5]{Len92}:
\begin{align}\label{bound class number}
    h_K\le \frac{d(n-1+\log d)^{n-1}}{(n-1)!},
\end{align}
where $d=(2/\pi)^s\sqrt{|\Delta_K|}$ and $s$ the number of complex embeddings of $K$.
\end{Rmk}

\subsection{An example of testing the non-splitting of a Hilbert exact sequence}\label{Sect: test_non_split}
In this subsection, we apply the method of Theorem~\ref{Thm: effective_test_nonsplit} to a concrete example, and verify the nonsplitting of the associated Hilbert exact sequence. We do not claim that our method is better than the theoretic approach, but we do hope that the method will prove to be useful in cases when the Galois group $\Gal(H_K/k)$, or the Hilbert class field $H_K$, is hard to compute.

\begin{eg}\label{eg: non_split_eg}
Let $K=\Q(\sqrt{-3}, \sqrt{13})$. It is clear that $K$ is Galois over $k=\Q$ with Galois group $\Gal(K/\Q)\simeq \Z/2\times \Z/2$. One can check that $|\Delta_K|=1521$, and $h_K=2$. One of its three quadratic subfields is $L=\Q(\sqrt{-3\times 13})$. Assume that $\sigma$ generates the Galois subgroup $\Gal(K/L)$ and take $C=\{\sigma\}$ to be the corresponding conjugacy class. Using Galois theory, the sub exact sequence 
$$
1\to Cl_K\to E_{\sigma}\to \langle \sigma\rangle\to 1
$$
splits if and only if one can find a prime integer $p$ satisfying 
\begin{enumerate}
    \item unramified in $K$;
    \item totally split in $L$;
    \item not totally split in $K$;
    \item factors principally in $K$. 
\end{enumerate}
Moreover, applying Theorem~\ref{Thm: effective_bound} (or equivalently, Theorem~\ref{Thm: effective_test_nonsplit}), if such a $p$ exists, it can be found under 
$$
 B_K= (4\times 2\times \log|1521|+2.5\times 4\times 2+5)^2< 6992.
$$ However, by directly checking with the help of computer, it is not hard to verify that there is no such prime integer. Hence $\mu_{K/\Q}^1(\sigma)=0$, and the associated Hilbert exact sequence does not split.  
\end{eg}

\begin{Rmk}
Although Yu \cite[Corollary~3.10]{Yu-splitting-Hilbert-class-fields} claims a generalization of this result for $K/k$ is abelian, it seems this is a counter example to his result. We appreciate Siman Wong for pointing this out.
\end{Rmk}

	\bibliographystyle{amsalpha}
	\bibliography{mybib}

\end{document}